\title{Almost Hermitian structures on tangent bundles\thanks{This is a slightly revised version of the paper published in the Proceedings of the 11th International Workshop on Differential Geometry, Vol. 11 (2007), pp.105--118.}}
\author{Hiroyasu Satoh}
\date{}
\theoremstyle{plain}
\newtheorem{theorem}{Theorem}[section]
\newtheorem{lemma}[theorem]{Lemma}
\newtheorem{proposition}[theorem]{Proposition}
\newtheorem{problem}[theorem]{Problem}
\theoremstyle{definition}
\newtheorem{definition}[theorem]{Definition}
\newtheorem{remark}[theorem]{Remark}
\newtheorem{example}[theorem]{Example}
\numberwithin{equation}{section}
\newcommand{\RN}{\mathbb{R}}
\def\tr{\mathop{\mathrm{Tr}}\nolimits}
\def\Ric{\mathop{\mathrm{Ric}}\nolimits}
\def\trp#1{\mathord{\mathopen{{\vphantom{#1}}^t}#1}}
\DeclareMathOperator*{\cycsum}{\mathfrak{S}}
\DeclareMathOperator*{\minisum}{\mbox{$\sum$}}
\begin{document}
\maketitle

\begin{abstract}
In this article, we consider the almost Hermitian structure on 
$TM$ induced by a pair of a metric and an affine connection on $M$. 
We find the conditions under which $TM$ admits almost K\"ahler 
structures, K\"ahler structures and Einstein metrics. 
Moreover, we provide two examples of K\"ahler-Einstein structures 
on $TM$. 
\vspace{8pt}

\noindent
{\it 2000 Mathematics Subject Classification}:\ 53C55, 53C15, 53C25.
\vspace{8pt}

\noindent
{\it Keywords}:
almost K\"ahler manifold, K\"ahler manifold, Einstein metric,
tangent bundle, Hessian structure, statistical model
\end{abstract}

\section{Introduction}

Let $D$ be an affine connection on a Riemannian manifold $(M^n,g)$
and $\pi : TM \rightarrow M$ the tangent bundle over $M$.
Then the connection $D$ induces the direct decomposition
\begin{equation}\label{decomp_TM}
T_\xi(TM)=H_\xi(TM)\oplus V_\xi(TM)
\end{equation}
of the tangent space at $\xi\in TM$ where $H_\xi(TM)$ is called
the horizontal subspace and $V_\xi(TM)$
the vertical subspace of $T_\xi(TM)$.
These subspaces are isomorphic to the tangent space $T_{\pi(\xi)}M$.
Under the decomposition \eqref{decomp_TM} and identifications
$H_\xi(TM), V_\xi(TM) \cong T_{\pi(\xi)}M$,
we can define an almost complex structure $J^D$ and a $J^D$-invariant
metric $\widetilde{g}^D$ which is known as the Sasaki metric,
roughly as follows;
\begin{equation*}
J^D=\left(\begin{array}{cc}O&I_n\\-I_n&O\end{array}\right),\quad
\widetilde{g}^D=g\oplus g.
\end{equation*}
We give the detailed definition of $J^D$
and $\widetilde{g}^D$ later.
We call the almost Hermitian structure
$(J^D, \widetilde{g}^D)$ the natural almost Hermitian structure on $TM$
induced by $(g, D)$.

In this article, we find the conditions under which $TM$
admits almost K\"ahler structures, K\"ahler structures and Einstein
metrics, respectively.
Moreover, we provide some examples of K\"ahler-Einstein structures on $TM$.
A reason why we investigate the Einstein condition is principally
that the Goldberg conjecture\cite{Go}, which states that
{\it a compact almost K\"ahler Einstein manifold is K\"ahler},
is still unsolved completely.
We now recall the definitions of almost K\"ahler and K\"ahler structures.
Let $(J, g)$ be an almost Hermitian structure.
If the K\"ahler form $\Omega=g(J\cdot , \cdot )$ is closed,
we say that $(J, g)$ is almost K\"ahler.
If $d\Omega=0$ and $J$ is integrable, we say that $(J, g)$ is K\"ahler.
Sekigawa\cite{SeK} proved that the Goldberg conjecture is true
when the scalar curvature is non-negative.
In the case that the scalar curvature is negative, some partial
solutions are obtained
(for example, see \cite{OS, It2}, also refer to \cite{SaH}).
It is known that the assumption about the compactness is essential for
the conjecture.
Nurowski and Przanowski\cite{NP} gave a counter-example to
non-compact version of the Goldberg conjecture by showing that $\RN^4$
admits an almost K\"ahler structure which is non-K\"ahler and Ricci flat.
The motivation of our research is to construct examples of (non-compact)
non-K\"ahler, almost K\"ahler Einstein manifolds with nonzero scalar
curvature, which remains still under investigation.

Our main theorem is the following;
\begin{theorem}\label{main}
Let $(g, D)$ be a metric and an affine connection on $M$
and $(J^D, \widetilde{g}^D)$ be the natural almost Hermitian structure
induced by $(g, D)$.
Then
\begin{enumerate}
 \item $(J^D, \widetilde{g}^D)$ is almost K\"ahler if and only if
the dual connection $D^*$ of $D$ with respect to $g$ is torsion-free.
Here the dual connection $D^*$ is the affine connection defined
by the condition
\begin{equation*}
Z\left(g(X, Y)\right) = g(D^*_Z X,Y) + g(X, D_Z Y)
\end{equation*}
for $X, Y, Z \in \mathfrak{X}(M)$
where $\mathfrak{X}(M)$ is the set of all smooth vector fields on $M$.
 \item $(J^D, \widetilde{g}^D)$ is K\"ahler if and only if 
$(M, g, D)$ is a Hessian manifold, i.e. the connection $D$ and 
its dual $D^*$ with respect to $g$ are both flat.
Here ``flat'' means that its torsion and curvature both vanish.
 \item If the Sasaki metric $\widetilde{g}^D$ on $TM$ is Einstein,
then the curvature tensor of $D$ vanishes.
\end{enumerate}
\end{theorem}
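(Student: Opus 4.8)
The plan is to prove all three statements by evaluating the relevant tensors on horizontal and vertical lifts of vector fields on $M$. Throughout I write $X^h, X^v$ for the horizontal and vertical lifts of $X\in\mathfrak{X}(M)$ determined by $D$, and I use the standard bracket relations
\begin{gather*}
[X^v, Y^v] = 0, \qquad [X^h, Y^v] = (D_X Y)^v, \\
[X^h, Y^h] = [X,Y]^h - (R^D(X,Y)\xi)^v,
\end{gather*}
where $R^D$ and $T^D$ denote the curvature and torsion of $D$. Since $J^D X^h = X^v$ and $J^D X^v = -X^h$, the K\"ahler form $\Omega=\widetilde{g}^D(J^D\,\cdot\,,\,\cdot\,)$ satisfies $\Omega(X^h, Y^v)=g(X,Y)$ and vanishes on pairs of lifts of the same type. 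I will also use the curvature duality relation $g(R^D(X,Y)Z,W) = -g(Z, R^{D^*}(X,Y)W)$, which follows by differentiating the defining equation of $D^*$.

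For (i), the plan is to compute $d\Omega$ on the four types of triples of lifts. The $(v,v,v)$ and $(h,v,v)$ triples vanish identically, since $\Omega$ is fibrewise constant and $[X^v,Y^v]=0$. A direct computation of the $(h,h,v)$ triple, in which $X^h(g(Y,Z))$ is expanded by means of the defining relation of $D^*$, collapses to
\begin{equation*}
d\Omega(X^h, Y^h, Z^v) = g\bigl(D^*_X Y - D^*_Y X - [X,Y],\,Z\bigr) = g\bigl(T^{D^*}(X,Y),Z\bigr),
\end{equation*}
so these components vanish exactly when $D^*$ is torsion-free. It remains to treat the $(h,h,h)$ triple, which produces the cyclic curvature sum $\cycsum_{X,Y,Z} g(R^D(X,Y)\xi,Z)$. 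Here the duality relation rewrites this as $-g\bigl(\xi,\cycsum_{X,Y,Z} R^{D^*}(X,Y)Z\bigr)$, and once $D^*$ is torsion-free the first Bianchi identity for $D^*$ gives $\cycsum_{X,Y,Z} R^{D^*}(X,Y)Z=0$; hence the $(h,h,h)$ component is automatically zero. Therefore $d\Omega=0$ if and only if $T^{D^*}=0$.

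For (ii), I would adjoin the integrability of $J^D$ to the almost K\"ahler condition. Computing the Nijenhuis tensor on lifts with the same brackets yields
\begin{equation*}
N_{J^D}(X^h, Y^h) = (T^D(X,Y))^h + (R^D(X,Y)\xi)^v,
\end{equation*}
and the mixed and vertical components reproduce the same two tensors. Since the horizontal part is $\xi$-independent and the vertical part must vanish for every $\xi$, integrability of $J^D$ holds precisely when $T^D=0$ and $R^D=0$, i.e.\ $D$ is flat. Combining this with (i), the K\"ahler condition is equivalent to $T^{D^*}=0$ together with $T^D=R^D=0$. Finally the duality relation shows $R^D=0\Rightarrow R^{D^*}=0$, so these are exactly the flatness of both $D$ and $D^*$; the converse is immediate. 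This is the Hessian condition.

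For (iii), the plan is to compute the Levi-Civita connection $\widetilde\nabla$ of $\widetilde{g}^D$ from the Koszul formula, using that $\widetilde{g}^D$ is $\xi$-independent in the adapted frame while the brackets contribute terms linear in $\xi$ through $R^D(\cdot,\cdot)\xi$; consequently $\widetilde\nabla$ is affine-linear in $\xi$, its curvature is quadratic in $\xi$, and so is the Ricci tensor $\widetilde{\Ric}$. The decisive point is that the horizontal-horizontal component has the schematic form
\begin{equation*}
\widetilde{\Ric}(X^h, X^h) = \Ric_M(X,X) - \tfrac12\sum_i g\bigl(R^D(X,e_i)\xi,\,R^D(X,e_i)\xi\bigr) + (\text{lower order in }\xi),
\end{equation*}
for a $g$-orthonormal frame $\{e_i\}$. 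Imposing $\widetilde{\Ric}=\lambda\,\widetilde{g}^D$ with $\lambda$ constant, the homogeneous quadratic part in $\xi$ must vanish, so $\sum_i g(R^D(X,e_i)\xi,R^D(X,e_i)\xi)=0$ for all $X$ and all $\xi$; being a sum of squares for the positive-definite $g$, this forces $R^D(X,e_i)\xi=0$ for every $i$ and $\xi$, hence $R^D=0$. The main obstacle is precisely the bookkeeping here: for a general, possibly non-metric and torsion-carrying connection $D$, the Koszul computation of $\widetilde\nabla$ and the contraction to $\widetilde{\Ric}$ carry extra terms built from $D^*-D$ and $T^D$, and one must verify that these do not cancel or obscure the positive quadratic-in-$\xi$ curvature term that drives the conclusion $R^D=0$.
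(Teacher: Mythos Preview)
Your arguments for (i) and (ii) are correct and coincide with the paper's: the same bracket formulas, the same $d\Omega$ computation yielding $g(T^{D^*}(X,Y),Z)$ on $(h,h,v)$ triples and the Bianchi cyclic sum for $D^*$ on $(h,h,h)$ triples, and the same Nijenhuis computation giving $(T^D(X,Y))^h + (R^D(X,Y)\xi)^v$.

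For (iii) your approach diverges from the paper's, though both are valid. The paper proves the stronger statement that $J^D$-invariance (or anti-invariance) of $\widetilde{\Ric}$ already forces $R^D=0$: it compares the $\xi$-quadratic parts of $\widetilde{\Ric}(X^H,Y^H)$ and $\widetilde{\Ric}(X^V,Y^V)$, substitutes $X=Y=e_i$, $\xi=e_j$, and sums to obtain $-\tfrac{1}{2}|R^D|^2_g=\pm\tfrac{1}{4}|R^D|^2_g$. Your route is more direct for the Einstein case alone: you observe that $\widetilde{g}^D(X^H,X^H)=g(X,X)$ is $\xi$-independent, so the $\xi$-quadratic part of $\widetilde{\Ric}(X^H,X^H)$ must vanish, and this part is a sum of squares $\sum_i |R^D(X,e_i)\xi|^2_g$. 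What you flag as the ``main obstacle''---the possibility that the extra $(D g)$- and torsion-terms contaminate the $\xi$-quadratic part---is in fact resolved by the paper's explicit Ricci formula (Proposition~\ref{RicSas}): all the $D g$ contributions to $\widetilde{\Ric}(X^H,Y^H)$ are $\xi$-independent, so the curvature term is the \emph{only} $\xi$-dependent piece and your sum-of-squares argument goes through cleanly. Thus your route buys a shorter path to (iii), while the paper's route yields the stronger $J^D$-invariance theorem.
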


\begin{remark}
(i)\ The cotangent bundle $T^*M$ carries a canonical symplectic form
$\Omega^*$.
The condition that $D^*$ is torsion-free is equivalent to the condition
that the K\"ahler form of $(J^D, \widetilde{g}^D)$ coincides with the
pull-back of $\Omega^*$ via the natural isomorphism induced by $g$
(Theorem \ref{cTB_symp}).

(ii)\ Dombrowski \cite{Dom} shows that the almost complex structure $J^D$
on $TM$ is integrable if and only if the connection $D$ is flat.
From Dombrowski's theorem and Theorem \ref{main} (i),
we get immediately Theorem \ref{main} (ii).

Under the assumption that $D$ is flat, it is known that $(M, g, D)$
is a  Hessian manifold if and only if $(TM, J^D, \widetilde{g}^D)$ is
K\"ahler.
Theorem \ref{main} (ii) asserts that $(J^D, \widetilde{g}^D)$ is K\"ahler
under the assumption weaker than in \cite[Proposition 2.2.4]{Sh}.

(iii)\ Theorem \ref{main} (iii) is a direct consequence of
Theorem \ref{thm_Jinv}.
In case that $\nabla$ is the Levi-Civita connection of $g$, it is known
that if $(TM, \widetilde{g}^{\nabla})$ is never locally symmetric unless
$(M, g)$ is locally Euclidean (\cite[Theorem 2]{Kow}).
\end{remark}

From Theorem \ref{main}, the natural almost
Hermitian structure $(J^D, \widetilde{g}^D)$ on $TM$ induced by $(g, D)$
which is non-K\"ahler, but almost K\"ahler and Einstein exists only
on a manifold $(M, g, D)$ which satisfies that
\begin{equation}\label{AKnonK}
\left\{
\begin{array}{l}
\bullet\ \ \mbox{the torsion of $D$ never vanishes, and}\vspace{3pt}\\
\bullet\ \ 
\mbox{the dual connection $D^*$ of $D$ with respect to $g$ is flat.}
\end{array}
\right.
\end{equation}
In section 4.1, we construct a family of metrics and connections which
satisfy above two conditions.
Moreover we show that this family includes an almost K\"ahler Einstein
structure.
However this structure is not what we are looking for because the metric
is pseudo-Riemannian and the almost complex structure is integrable.

In section 4.2, we investigate under which conditions the tangent bundle $TM$ 
admits a K\"ahler-Einstein structure. 
As Theorem \ref{main} (ii) asserts, the natural almost Hermitian structure
which is K\"ahler can be constructed only on Hessian manifolds. 
As an example of Hessian manifold, we consider the statistical model
$\mathscr{P}_n^\rho$ which is a set of probability distributions on
$\RN^n$ induced by a matrix-valued linear map $\rho$ on a open subset
$U\subset\RN^m$.
For example, the tangent bundle over the manifold $\mathscr{P}_n^\rho$
which is induced by $\rho(t)=tI_n$ for $t\in \RN_{+}$ is shown
to have constant holomorphic sectional curvature and consequently
this is K\"ahler-Einstein. 
Here $I_n$ is the unit $n\times n$-matrix.
Thus there exists many Hessian manifolds induced by matrix-valued linear
maps.
Do there exist other matrix-valued linear maps $\rho$ such that
the tangent bundle of $\mathscr{P}_n^\rho$ is K\"ahler-Einstein?
We give new examples of Hessian manifolds whose tangent bundle
admits a K\"ahler-Einstein structure (Theorem \ref{Hmain}).

\section{Preliminaries}

\subsection{Definitions}

Let $(M^n, g)$ be an $n$-dimensional Riemannian manifold 
with an affine connection $D$. 
We denote the coefficients of the connection $D$ with respect to a local 
coordinate system $(U; x^1,\ldots, x^n)$ by $\{\Gamma^k_{ij}\}$;
\begin{equation*}
D_{\frac{\partial}{\partial x^i}}\frac{\partial}{\partial x^j}
=\sum_{k=1}^n\Gamma^k_{ij}\frac{\partial}{\partial x^k}.
\end{equation*}
Let $\pi : TM \rightarrow M$ be the tangent bundle over a manifold $M$. 
We define smooth functions $y^1,\ldots,y^n$ on $TM$ 
by $y^j(\xi)=\xi^j$ for
$\xi=\minisum_i \xi^i\frac{\partial}{\partial x^i}$.
Then, $(\pi^{-1}(U); x^1,\ldots, x^n,y^1,\ldots, y^n)$ is a local coordinate
system of $TM$.

For $X=\minisum X^i \frac{\partial}{\partial x^i},
\ \xi\in T_xM$, we define the
{\it horizontal lift} $X^H_\xi$ and the {\it vertical lift} $X^V_\xi$ of
$X$ at $\xi$ by
\begin{equation*}
\begin{split}
X^H_\xi&=\sum_i X^i \frac{\partial}{\partial x^i}
-\sum_{i,j,k}\Gamma^k_{ij} X^i y^j(\xi)\frac{\partial}{\partial y^k},\\
X^V_\xi&=\sum_i X^i\frac{\partial}{\partial y^i},
\end{split}
\end{equation*}
respectively.
$X^H_\xi, X^V_\xi$ are tangent vectors at $\xi\in TM$.
We set
\begin{equation*}
\begin{split}
H_\xi(TM):=& \{ X^H_\xi{\ };{\ }X\in T_xM\},\\
V_\xi(TM):=& \{ X^V_\xi{\ };{\ }X\in T_xM\},
\end{split}
\end{equation*}
and
\begin{equation*}
H(TM):=\bigcup_{\xi\in TM} H_\xi(TM),{\ }{\ }{\ }
V(TM):=\bigcup_{\xi\in TM} V_\xi(TM).
\end{equation*}
We call $H(TM)$ and $V(TM)$ the {\it horizontal} and the 
{\it vertical subbundles}, respectively. 
Then we obtain the direct decomposition of the tangent bundle over $TM$;
\begin{equation*}
T(TM)=H(TM)\oplus V(TM).
\end{equation*}

\begin{definition}
Let $(M,g)$ be a Riemannian manifold with an affine connection $D$. 
Then we define an almost complex structure $J^D$ by
\begin{equation*}
J^D X^H_\xi = X^V_\xi,{\ }{\ }{\ }J^D X^V_\xi = -X^H_\xi,
\end{equation*}
and a Riemannian metric $\widetilde{g}^D$ on $TM$, 
which is called the {\it Sasaki metric}, by
\begin{equation*}
\widetilde{g}^D(X^H_\xi, Y^H_\xi)=\widetilde{g}^D(X^V_\xi, Y^V_\xi)
=g(X,Y),{\ }{\ }{\ }\widetilde{g}^D(X^H_\xi, Y^V_\xi)=0
\end{equation*}
for $X, Y, \xi\in T_xM$. 
We call $(J^D, \widetilde{g}^D)$ the {\it natural almost  Hermitian
structure on $TM$ induced by $(g, D)$}.
\end{definition}

Now we mention the dual connection.
We recall that the {\it dual connection} $D^*$ of a connection $D$
with respect to a metric $g$ is defined by
\begin{equation}\label{def_dualconn}
Z\left(g(X, Y)\right) = g(D^*_Z X,Y) + g(X, D_Z Y).
\end{equation}
The torsion tensor $T^D$ and the curvature tensor $R^D$ of a connection
$D$ are $(1,2)$- and $(1,3)$-tensor fields respectively, defined by
\begin{gather}
T^D(X,Y)=D_X Y-D_Y X-[X,Y],\notag\\
R^D(X,Y)Z=D_X(D_Y Z)-D_Y(D_X Z)-D_{[X,Y]}Z,
\end{gather}
where $X, Y, Z\in \mathfrak{X}(M)$.
The curvature and torsion tensors of a connection $D$
and its dual $D^*$ satisfy the following relation;
\begin{gather}
g(T^{D^*}(X, Y), Z)=(D_X g)(Y, Z)-(D_Y g)(X, Z)+g(T^D(X, Y), Z),\label{dualT}\\
R^{D^*}_g(Z, W, X, Y)=-R^D_g(W, Z, X, Y)\label{dualR}
\end{gather}
for $X, Y, Z, W\in T_xM$.
Here $R^D_g$ is the $(0,4)$-tensor field defined by
\begin{equation*}
R^D_g(W, Z, X, Y)=g(R^D(X, Y)Z, W).
\end{equation*}

\subsection{The Levi-Civita connection and the curvature of the Sasaki metric}

The bracket product of horizontal and vertical vectors are determined by 
the following formulae (\cite[Lemma 2]{Dom});
\begin{equation}
\begin{split}
\left[X^H, Y^H\right]_{(\xi)}
&=([X, Y]_{(x)})^H_\xi-\left(R^D(X_{(x)}, Y_{(x)})\xi\right)^V_\xi,\\
\left[X^H, Y^V\right]_{(\xi)}
&={\left(D_X Y\right)^V}_{(\xi)},\\
\left[X^V, Y^V\right]_{(\xi)}&=0
\end{split}\label{bracket}
\end{equation}
for $X, Y\in \mathfrak{X}(M)$ and $\xi\in T_xM$.
Here $X^H$ and $X^V$ are vector fields on $TM$ defined by
\begin{equation*}
{X^H}_{(\xi)}=(X_{(x)})^H_{\xi}, ~~~~~{X^V}_{(\xi)}=(X_{(x)})^H_{\xi}.
\end{equation*}

Let $\widetilde{\nabla}$ be the Levi-Civita connection of the Sasaki 
metric $\widetilde{g}^D$ induced by $(g, D)$.
Using \eqref{bracket} and the explicit formula
\begin{multline}
2 \widetilde{g}^D(\widetilde{\nabla}_\mathcal{X} \mathcal{Y}, \mathcal{Z})
=\mathcal{X} \left(\widetilde{g}^D(\mathcal{Y}, \mathcal{Z})\right)
+\mathcal{Y} \left(\widetilde{g}^D(\mathcal{X}, \mathcal{Z})\right)
-\mathcal{Z} \left(\widetilde{g}^D(\mathcal{X}, \mathcal{Y})\right)\\
+\widetilde{g}^D([\mathcal{X}, \mathcal{Y}],\mathcal{Z})
+\widetilde{g}^D([\mathcal{Z}, \mathcal{X}],\mathcal{Y})
+\widetilde{g}^D([\mathcal{Z}, \mathcal{Y}],\mathcal{X})\notag
\end{multline}
for $\mathcal{X}, \mathcal{Y}, \mathcal{Z}\in \mathfrak{X}(TM)$,
we obtain the following.

\begin{lemma}\label{nabla}
Let $\widetilde{\nabla}$ be the Levi-Civita connection of
the Sasaki  metric $\widetilde{g}^D$ induced by $(g, D)$.
If $X, Y, Z\in \mathfrak{X}(M)$ and $\xi\in T_x M$, then
\begin{gather}
(\widetilde{\nabla}_{X^H}Y^H)_{(\xi)}
={\left(\nabla_X Y\right)^H}_{(\xi)}
-\frac{1}{2}\left(R^D(X_{(x)},Y_{(x)})\xi\right)^V_\xi,
\notag\\
\widetilde{g}^D\left(\widetilde{\nabla}_{X^V}Y^H, Z^H\right)_{(\xi)}
=\widetilde{g}^D\left(\widetilde{\nabla}_{Y^H}X^V, Z^H\right)_{(\xi)}
=\frac{1}{2}R^D_g(X_{(x)},\xi,Y_{(x)},Z_{(x)}),\notag\\
\widetilde{g}^D\left(\widetilde{\nabla}_{X^V}Y^H, Z^V\right)_{(\xi)}
=\frac{1}{2}(D_Y g)(Z,X)_{(x)},\notag\\
\widetilde{g}^D\left(\widetilde{\nabla}_{X^H}Y^V, Z^V\right)_{(\xi)}
=g(D_X Y, Z)_{(x)}+\frac{1}{2}(D_X g)(Y,Z)_{(x)},\notag\\
\widetilde{g}^D\left(\widetilde{\nabla}_{X^V}Y^V, Z^H\right)_{(\xi)}
=-\frac{1}{2}(D_Z g)(X, Y)_{(x)},\notag\\
\widetilde{g}^D\left(\widetilde{\nabla}_{X^V}Y^V, Z^V\right)_{(\xi)}=0.
\notag
\end{gather}
\end{lemma}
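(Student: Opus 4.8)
The plan is to read off $\widetilde{\nabla}$ from the Koszul formula displayed just above, using the fact that a Levi-Civita connection is determined by the inner products $\widetilde{g}^D(\widetilde{\nabla}_{\mathcal{X}}\mathcal{Y},\mathcal{Z})$ against all $\mathcal{Z}$. Since $T(TM)=H(TM)\oplus V(TM)$ and every element of these subbundles is a horizontal or vertical lift, it suffices to evaluate the formula when each of $\mathcal{X},\mathcal{Y},\mathcal{Z}$ is of the form $X^H$ or $X^V$. This produces eight cases (four choices for the pair $(\mathcal{X},\mathcal{Y})$, and for each the two choices $\mathcal{Z}=Z^H,Z^V$), and each stated identity is the horizontal or vertical component extracted from one of them.

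First I would record two standing simplifications that make every case routine. On the one hand, the three ``metric-derivative'' terms in the Koszul formula involve only the functions $\widetilde{g}^D(Y^\bullet,Z^\bullet)$; by the definition of $\widetilde{g}^D$ these equal $g(Y,Z)\circ\pi$ when the two lifts have the same type and vanish when they have opposite type. Since $X^H(f\circ\pi)=(Xf)\circ\pi$ and $X^V(f\circ\pi)=0$ for $f\in C^\infty(M)$, each such term either vanishes or reduces to a derivative $X(g(Y,Z))$ on $M$. On the other hand, the three bracket terms are handled by substituting the bracket formulas \eqref{bracket} and pairing with $\mathcal{Z}$ via the orthogonality $\widetilde{g}^D(X^H,Y^V)=0$; the only surviving contributions are the $g([X,Y],Z)$-type terms in the purely horizontal pairing, the $g(D_\bullet\,\cdot\,,\cdot)$-terms coming from $[X^H,Y^V]=(D_XY)^V$, and the curvature term from the vertical part of $[X^H,Y^H]$.

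With these in hand each case collapses quickly. The $(X^H,Y^H)$ case paired against $Z^H$ reproduces verbatim the Koszul formula for the Levi-Civita connection $\nabla$ of $g$ on $M$, giving the horizontal part $(\nabla_XY)^H$; paired against $Z^V$ only the vertical part of $[X^H,Y^H]$ survives, giving $-\frac{1}{2}(R^D(X,Y)\xi)^V$. In the mixed and purely vertical cases the non-metricity of $D$ enters through the single identity $X(g(Y,Z))=g(D_XY,Z)+g(Y,D_XZ)+(D_Xg)(Y,Z)$, which is exactly what converts the surviving derivative terms into the $(D_\bullet g)$-contributions in the statement; the purely vertical pairing $\widetilde{g}^D(\widetilde{\nabla}_{X^V}Y^V,Z^V)$ vanishes because every term in the Koszul formula dies.

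The work is essentially bookkeeping, so the one point demanding care is matching the raw output of each case to the normalized form in the statement. Two elementary symmetries do this: the tensor $D_Xg$ is symmetric in its two arguments, which turns $(D_Yg)(X,Z)$ into $(D_Yg)(Z,X)$, and the curvature antisymmetry $R^D_g(W,\xi,Z,Y)=-R^D_g(W,\xi,Y,Z)$, immediate from $R^D(Z,Y)=-R^D(Y,Z)$, which reconciles the order of the last two arguments in the terms involving $\widetilde{\nabla}_{X^V}Y^H$ and $\widetilde{\nabla}_{Y^H}X^V$. That these two expressions share the same horizontal component is not an added hypothesis but an output of the computation: both reduce to $-g(R^D(Z,Y)\xi,X)$.
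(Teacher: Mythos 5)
Your proposal is correct and follows exactly the paper's route: the paper derives the lemma by evaluating the displayed Koszul formula on horizontal and vertical lifts using the bracket formulas \eqref{bracket}, which is precisely your case-by-case computation (and your normalizations via the symmetry of $D_Xg$ and the antisymmetry $R^D(Z,Y)=-R^D(Y,Z)$ check out against the stated formulas).
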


We shall give the formulae of the curvature tensor of the Sasaki metric
$\widetilde{g}^D$.

\begin{proposition}
Let $\widetilde{R}$ be the Riemannian curvature tensor of
$\widetilde{g}^D$.
If $X, Y, Z, W, \xi \in T_xM$, then
\begin{equation}\label{RHHHH}
\begin{split}
\widetilde{R}_{\widetilde{g}^D}(Z^H_\xi, &W^H_\xi,X^H_\xi, Y^H_\xi)\\
=&R^{\nabla}_g(Z, W, X, Y)
-\frac{1}{2}R^D_g(R^D(Z,W)\xi, \xi, X, Y)\\
&-\frac{1}{4}\left\{R^D_g(R^D(X, Z)\xi, \xi, Y, W)
-R^D_g(R^D(Y, Z)\xi, \xi, X, W)\right\},
\end{split}
\end{equation}
\begin{equation}
\begin{split}
\widetilde{R}_{\widetilde{g}^D}(Z^H_\xi, W^V_\xi,X^V_\xi, Y^V_\xi)
=\frac{1}{4}\sum_i &\left\{  (D_{e_i} g)(Y,W){\ }R^D(X, \xi, Z, e_i)\right.\\
&{\ }{\ }{\ }-\left.(D_{e_i} g)(X,W){\ }R^D(Y, \xi, Z, e_i)\right\},
\end{split}
\end{equation}
\begin{equation}
\begin{split}
\widetilde{R}_{\widetilde{g}^D}(Z^V_\xi, W^V_\xi,X^V_\xi, Y^V_\xi)
=-\frac{1}{4}\sum_i & \left\{(D_{e_i} g)(X,Z){\ }(D_{e_i} g)(Y,W)\right.\\
&{\ }{\ }{\ }-\left.(D_{e_i} g)(Y,Z){\ }(D_{e_i} g)(X,W)\right\},
\end{split}
\end{equation}
\begin{equation}
\begin{split}
\widetilde{R}_{\widetilde{g}^D}(Z^H_\xi, &W^V_\xi, X^H_\xi, Y^H_\xi)\\
=&\frac{1}{2}R^D_g(Z, \xi, W, T^D(X, Y))
+\frac{1}{2}(D_W g)(Z, R^D(X,Y)\xi)\\
&+\frac{1}{2}\left\{
R^D_g(Z, \xi, \gamma(X, W), Y)-R^D_g(Z, \xi, \gamma(Y, W), X)
\right\}\\
&-\frac{1}{4}\left\{
(D_Y g)(Z, R^D(W, X)\xi)-(D_X g)(Z, R^D(W, Y)\xi)
\right\}\\
&+\frac{1}{2}g\left(
(D_X R^D)(Y, W)\xi-(D_Y R^D)(X, W)\xi, Z
\right),
\end{split}
\end{equation}
\begin{equation}
\begin{split}
\widetilde{R}_{\widetilde{g}^D}(Z^V_\xi, W^V_\xi, X^H_\xi, Y^H_\xi)
=&\frac{1}{2}\left\{
R^D_g(Z, W, X, Y)-R^D_g(W, Z, X, Y)
\right\}\\
&-\frac{1}{4}\sum_i\left\{
R^D_g(Z, \xi, X, e_i){\ }R^D_g(W, \xi, Y, e_i)\right.\\
&\hspace{43pt}-\left.R^D_g(W, \xi, X, e_i){\ }R^D_g(Z, \xi, Y, e_i)\right\}\\
&-\frac{1}{4}\sum_i \left\{(D_X g)(Z, e_i){\ }(D_Y g)(W, e_i)\right.\\
&\hspace{43pt}-\left.(D_X g)(W, e_i){\ }(D_Y g)(Z, e_i)\right\},
\end{split}
\end{equation}
\begin{equation}\label{RHVHV}
\begin{split}
\widetilde{R}_{\widetilde{g}^D}(Z^H_\xi, W^V_\xi, X^H_\xi, Y^V_\xi)
=&\frac{1}{2}R^D_g(W, Y, Z, X)\\
&-\frac{1}{2}(D^2_{XZ} g)(Y, W)
-\frac{1}{2}(D_{\gamma(X, Z)} g)(Y, W)\\
&+\frac{1}{4}
\sum_i R^D_g(W, \xi, X, e_i){\ }R^D_g(Y, \xi, Z, e_i)\\
&+\frac{1}{4}\sum_i (D_X g)(W, e_i){\ }(D_Z g)(Y, e_i).
\end{split}
\end{equation}
Here $\{e_i\}$ is an orthonormal basis of $T_x M$ and
$\gamma^D$ is the difference between the connection $D$ and $\nabla$;
\begin{equation*}
\gamma^D(X, Y)=D_X Y-\nabla_X Y.
\end{equation*}
\end{proposition}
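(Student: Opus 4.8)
The plan is to compute $\widetilde{R}$ directly from its definition
\[
\widetilde{R}(\mathcal{X},\mathcal{Y})\mathcal{W}
=\widetilde{\nabla}_{\mathcal{X}}\widetilde{\nabla}_{\mathcal{Y}}\mathcal{W}
-\widetilde{\nabla}_{\mathcal{Y}}\widetilde{\nabla}_{\mathcal{X}}\mathcal{W}
-\widetilde{\nabla}_{[\mathcal{X},\mathcal{Y}]}\mathcal{W},
\]
and to read off each component through
$\widetilde{R}_{\widetilde{g}^D}(\mathcal{Z},\mathcal{W},\mathcal{X},\mathcal{Y})
=\widetilde{g}^D(\widetilde{R}(\mathcal{X},\mathcal{Y})\mathcal{W},\mathcal{Z})$,
following the convention fixed by the definition of $R^D_g$. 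There are six cases, according to how many of $\mathcal{X},\mathcal{Y},\mathcal{W},\mathcal{Z}$ are horizontal or vertical lifts. First I would upgrade Lemma \ref{nabla} from its inner-product form to explicit formulas for the full covariant derivatives $\widetilde{\nabla}_{A^\bullet}B^\bullet$: since $\widetilde{g}^D$ is block diagonal with $H(TM)\perp V(TM)$, pairing against $Z^H$ and $Z^V$ recovers the horizontal and vertical components separately, so each $\widetilde{\nabla}_{A^\bullet}B^\bullet$ is expressed as the sum of a horizontal lift and a vertical lift whose coefficients are built from $R^D_g$, $Dg$ and $\gamma^D$.

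Next I would fix $x\in M$ and $\xi\in T_xM$ and extend $X,Y,Z,W$ to vector fields on $M$ chosen so that all their $\nabla$-covariant derivatives vanish at $x$ (a geodesic frame for $\nabla$). At $x$ this forces $[X,Y]=0$ and kills the $(\nabla_X Y)^H$ term of Lemma \ref{nabla}, so that only the tensorial data $R^D$, $DR^D$, $T^D$, $Dg$ and $\gamma^D$ survive. The bracket formulas \eqref{bracket} then reduce at $\xi$ to
\[
[X^H,Y^H]_{(\xi)}=-\left(R^D(X,Y)\xi\right)^V_\xi,\qquad
[X^H,Y^V]_{(\xi)}=\left(D_XY\right)^V_{(\xi)},\qquad
[X^V,Y^V]=0,
\]
which feed the $\widetilde{\nabla}_{[\mathcal{X},\mathcal{Y}]}$ term.

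The one ingredient beyond a mechanical iteration of Lemma \ref{nabla} is the horizontal differentiation of $\xi$-dependent vertical fields, which is where the $(D_XR^D)$ terms originate. The coefficients occurring in Lemma \ref{nabla}, such as $R^D_g(\,\cdot\,,\xi,\,\cdot\,,\,\cdot\,)$, are linear in the fiber variable $\xi$; writing $\xi=\sum_j y^j e_j$ and using that the horizontal lift acts on the fiber coordinates by $X^H y^j=-\sum_{i,k}\Gamma^j_{ik}X^iy^k$, the $\Gamma$-terms combine with the base derivative of the coefficient to turn the naive derivative into a $D$-covariant one. Geometrically this is just the statement that $\xi$ is $D$-parallel along horizontal curves, so its horizontal derivative vanishes and only the $D$-derivative of the tensor coefficients remains. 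Expanding a $\xi$-dependent vertical field in a parallel frame and applying the Leibniz rule together with the $\widetilde{\nabla}_{X^H}(e_k)^V$ formula then produces precisely the $(D_XR^D)$ contributions appearing in the fourth formula.

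The main obstacle is the bookkeeping in the two cases that carry a second horizontal derivative of a curvature term, namely \eqref{RHHHH} and the formula for $\widetilde{R}_{\widetilde{g}^D}(Z^H_\xi,W^V_\xi,X^H_\xi,Y^H_\xi)$. In the latter the vertical part of the second derivative survives the pairing with $W^V$ and yields the $(D_XR^D)$ terms, whereas in the former the pairing with $Z^H$ selects instead the quadratic expressions in $R^D_g$; in both cases many structurally similar products of $R^D_g$ with itself and with $Dg$ appear. Sorting these by symmetry type, contracting the sums over the orthonormal frame $\{e_i\}$ through the definition of $R^D_g$ and the duality relation \eqref{dualR}, and verifying that the antisymmetrizations in $X,Y$ and in $Z,W$ come out with the stated coefficients $\tfrac12$ and $\tfrac14$ is the delicate part; the remaining four cases are comparatively direct substitutions into the upgraded Lemma \ref{nabla}.
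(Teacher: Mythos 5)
Your proposal is correct and takes essentially the same route as the paper, which gives no details beyond stating that the proposition follows ``by direct calculations using Lemma \ref{nabla}''; your outline (recovering the full covariant derivatives from the block-orthogonality of $\widetilde{g}^D$, working in a $\nabla$-geodesic frame, and observing that horizontal differentiation of $\xi$-dependent vertical fields produces $D$-covariant derivatives such as $(D_XR^D)(Y,W)\xi$ because $\xi$ is $D$-parallel along horizontal curves) supplies exactly the mechanics the paper leaves implicit. One small caution: in the geodesic frame $(\nabla_YZ)^H$ vanishes only at the base point, not identically, and it is the horizontal derivative of this term that yields the $R^{\nabla}_g(Z,W,X,Y)$ contribution in \eqref{RHHHH}.
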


A proof of the above theorem is given by direct calculations using 
Lemma \ref{nabla}. 

\begin{proposition}\label{RicSas}
The Ricci tensor $\widetilde{\Ric}$ of $\widetilde{g}^D$, defined by
\begin{equation*}
\widetilde{\Ric}(\mathcal{X}, \mathcal{Y})
=\tr \left\{\mathcal{Z}\mapsto 
\widetilde{R}(\mathcal{Z}, \mathcal{Y})\mathcal{X}\right\},
\end{equation*}
is given by the following formulae:
\begin{equation}\label{RicHH}
\begin{split}
\widetilde{\Ric}(X^H_\xi, Y^H_\xi)
=&\Ric^{\nabla}(X, Y)
-\frac{1}{2}\sum_i R^D_g(R^D(X,e_i)\xi,\xi,Y,e_i)\\
&-\frac{1}{4}\sum_i 
\left( (D^2_{XY}+D^2_{YX}+D_{\gamma(X,Y)+\gamma(Y,X)} )g\right) (e_i, e_i)\\
&+\frac{1}{4}\sum_i (D_X g)(e_i, e_j){\ }(D_Y g)(e_i, e_j),
\end{split}
\end{equation}
\begin{equation}\label{RicVV}
\begin{split}
\widetilde{\Ric}(X^V_\xi, Y^V_\xi)
=&\frac{1}{4}\sum_{i,j} R^D_g(X, \xi, e_i,e_j){\ }R^D_g(Y,\xi,e_i,e_j)\\
&-\frac{1}{2}\sum_i \left\{(D^2_{e_i e_i} g)(X, Y)
-(D_{e_i} g)(X, Y)\tr(T^{D^*})(e_i)\right\}\\
&+\frac{1}{2}\sum_{i,j}(D_{e_i} g)(X, e_j){\ }(D_{e_i} g)(Y,e_j),\\
\end{split}
\end{equation}
\begin{equation}\label{RicHV}
\begin{split}
\widetilde{\Ric}(X^H_\xi, &Y^V_\xi)\\
=&\frac{1}{2}\sum_i\left\{R^D_g(Y,\xi,X,e_i){\ }\tr (\gamma^D)(e_i)
-R^D_g(Y, \xi,  e_i, \gamma^D(e_i, X))\right\}\\
&-\frac{1}{2}\sum_i \left\{g\left( (D_{e_i} R^D)(e_i, X)\xi, Y\right)
+(D_{e_i} g)(R^D(e_i, X)\xi, Y)\right\}\\
&+\frac{1}{4}\sum_{i,j} R^D_g(Y,\xi,X,e_i){\ }(D_{e_i} g)(e_j, e_j).
\end{split}
\end{equation}
Here $\Ric^{\nabla}$ is the Ricci tensor of the Levi-Civita connection
$\nabla$ of $g$ and $\tr(\gamma^D)$ is a 1-form defined by
\begin{equation*}
\tr(\gamma^D)(X)=\tr\{Z\mapsto \gamma^D(Z,X)\}.
\end{equation*}
\end{proposition}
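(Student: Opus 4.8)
\section*{Proof proposal for Proposition \ref{RicSas}}

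The plan is to compute $\widetilde{\Ric}$ as an honest metric trace of the curvature operator, using the orthonormal frame supplied by the lifts of an orthonormal basis downstairs, and then to read off each entry from the curvature formulas of the previous proposition. Fix $\xi\in T_xM$ and an orthonormal basis $\{e_i\}$ of $T_xM$. By the definition of $\widetilde{g}^D$ the $2n$ vectors $\{(e_i)^H_\xi\}\cup\{(e_i)^V_\xi\}$ form an orthonormal basis of $T_\xi(TM)$. Writing the Sasaki $(0,4)$-tensor with the same convention as for the base, namely $\widetilde{R}_{\widetilde{g}^D}(\mathcal{W},\mathcal{Z},\mathcal{X},\mathcal{Y})=\widetilde{g}^D(\widetilde{R}(\mathcal{X},\mathcal{Y})\mathcal{Z},\mathcal{W})$, the defining contraction $\widetilde{\Ric}(\mathcal{X},\mathcal{Y})=\tr\{\mathcal{Z}\mapsto\widetilde{R}(\mathcal{Z},\mathcal{Y})\mathcal{X}\}$ becomes
\begin{equation*}
\widetilde{\Ric}(\mathcal{X},\mathcal{Y})=\sum_i\widetilde{R}_{\widetilde{g}^D}\bigl((e_i)^H_\xi,\mathcal{X},(e_i)^H_\xi,\mathcal{Y}\bigr)+\sum_i\widetilde{R}_{\widetilde{g}^D}\bigl((e_i)^V_\xi,\mathcal{X},(e_i)^V_\xi,\mathcal{Y}\bigr).
\end{equation*}
The three formulas \eqref{RicHH}, \eqref{RicVV}, \eqref{RicHV} are then obtained by inserting $(\mathcal{X},\mathcal{Y})\in\{(X^H_\xi,Y^H_\xi),(X^V_\xi,Y^V_\xi),(X^H_\xi,Y^V_\xi)\}$.

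First I would match, for each component, the summands to a listed normal form using the pair symmetry $\widetilde{R}_{\widetilde{g}^D}(\mathcal{A},\mathcal{B},\mathcal{C},\mathcal{D})=\widetilde{R}_{\widetilde{g}^D}(\mathcal{C},\mathcal{D},\mathcal{A},\mathcal{B})$ and the skew-symmetries in the first and last pairs. For \eqref{RicHH} the horizontal part is \eqref{RHHHH} and the vertical part is, after $\widetilde{R}((e_i)^V,X^H,(e_i)^V,Y^H)=\widetilde{R}(X^H,(e_i)^V,Y^H,(e_i)^V)$, the formula \eqref{RHVHV}. For \eqref{RicVV} one uses \eqref{RHVHV} together with the purely vertical formula for $\widetilde{R}_{\widetilde{g}^D}(Z^V,W^V,X^V,Y^V)$. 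For \eqref{RicHV} one uses the formula for $\widetilde{R}_{\widetilde{g}^D}(Z^H,W^V,X^H,Y^H)$ (via the pair symmetry) and the formula for $\widetilde{R}_{\widetilde{g}^D}(Z^H,W^V,X^V,Y^V)$ (with a sign from the skew-symmetry). Then I contract over $i$. The algebra here is routine: every term containing $R^D(e_i,e_i)$ drops, $\sum_iR^{\nabla}_g(e_i,X,e_i,Y)$ assembles $\Ric^{\nabla}(X,Y)$, and the quadratic $R^D_g$-terms and $(Dg)$-products from the horizontal and vertical contributions add with coefficients $\tfrac14+\tfrac14=\tfrac12$ to give the stated constants.

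The genuinely delicate point is the emergence of $\tr(T^{D^*})$ in \eqref{RicVV}, since it belongs to no single curvature formula. Contracting \eqref{RHVHV} produces $-\tfrac12(D_Vg)(X,Y)$ with $V=\sum_i\gamma^D(e_i,e_i)$, while contracting the purely vertical formula produces $\tfrac12\sum_i\beta(e_i)\,(D_{e_i}g)(X,Y)$ with $\beta(e_i)=\sum_jg(\gamma^D(e_i,e_j),e_j)$. To fuse these into $\tfrac12\sum_i(D_{e_i}g)(X,Y)\,\tr(T^{D^*})(e_i)$ I would record the consequences of $\nabla g=0$ and \eqref{dualT}, namely $(D_Zg)(X,Y)=-g(\gamma^D(Z,X),Y)-g(\gamma^D(Z,Y),X)$, the identity $T^{D^*}(X,Y)=\gamma^{D^*}(X,Y)-\gamma^{D^*}(Y,X)$ for the difference tensor $\gamma^{D^*}$ of $D^*$, and $g(\gamma^{D^*}(Z,X),Y)=-g(X,\gamma^D(Z,Y))$. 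A short computation then yields $\tr(T^{D^*})(e_i)=\beta(e_i)-g(V,e_i)$, and since $\sum_ig(V,e_i)(D_{e_i}g)(X,Y)=(D_Vg)(X,Y)$, the two contributions combine exactly as required.

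The second delicate point, which I expect to be the main obstacle, is the curvature-derivative term in \eqref{RicHV}. Contracting the last line of the formula for $\widetilde{R}_{\widetilde{g}^D}(Z^H,W^V,X^H,Y^H)$ gives $\tfrac12\sum_ig((D_{e_i}R^D)(X,Y)\xi,e_i)-\tfrac12\sum_ig((D_XR^D)(e_i,Y)\xi,e_i)$, whereas the target term $-\tfrac12\sum_i\{g((D_{e_i}R^D)(e_i,X)\xi,Y)+(D_{e_i}g)(R^D(e_i,X)\xi,Y)\}$ is the differently-contracted object $-\tfrac12\sum_i(D_{e_i}R^D_g)(Y,\xi,e_i,X)$ (the extra $(D_{e_i}g)$ piece accounting for $D g\neq0$). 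Passing between these index placements requires the second Bianchi identity for $D$ with its torsion correction, together with the first Bianchi identity to handle the $R^D_g(Z,\xi,\gamma^D(\cdot,\cdot),\cdot)$ and $R^D_g(Z,\xi,W,T^D(\cdot,\cdot))$ terms. Once these curvature identities and the relations above are in hand, collecting like terms produces \eqref{RicHH}, \eqref{RicVV}, \eqref{RicHV}; the remainder is bookkeeping rather than conceptual difficulty.
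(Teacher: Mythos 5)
Your proposal is correct and follows essentially the same route as the paper: express $\widetilde{\Ric}$ as the trace over the orthonormal frame $\{(e_i)^H_\xi,(e_i)^V_\xi\}$ of $T_\xi(TM)$, substitute the curvature formulae \eqref{RHHHH}--\eqref{RHVHV} of the preceding proposition, and contract. The paper compresses all of this into ``substituting \eqref{RHHHH} and \eqref{RHVHV} \ldots\ similarly, by simple calculations,'' so your explicit treatment of the $\tr(T^{D^*})$ identity and the Bianchi-type rearrangement in \eqref{RicHV} simply fills in the bookkeeping the paper omits.
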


\begin{proof}
Using an orthonormal frame $\{E_i\}$ on $TM$, 
we can express $\widetilde{\Ric}$ by
\begin{equation*}
\widetilde{\Ric}(\mathcal{X},\mathcal{Y})=\sum_{i=1}^{2n}
\widetilde{R}_{\widetilde{g}^D}(E_i, \mathcal{X}, E_i, \mathcal{Y}).
\end{equation*}
If $\{e_i\}$ is an orthonormal basis of $T_x M$ with respect to $g$, 
then for $\xi\in T_xM$ $\{{e_i}^H_\xi, {e_i}^V_\xi\}$ is an orthonormal basis 
of $T_\xi(TM)$ with respect to $\widetilde{g}^D$. 
Hence we have
\begin{equation}\label{HHex}
\begin{split}
\widetilde{\Ric}(X^H_\xi, Y^H_\xi)
=\sum_{i=1}^n&\left\{
\widetilde{R}_{\widetilde{g}^D}({e_i}^H_\xi,X^H_\xi,{e_i}^H_\xi,Y^H_\xi)
\right.\\
&{\ }{\ }{\ }+\left.
\widetilde{R}_{\widetilde{g}^D}({e_i}^V_\xi,X^H_\xi,{e_i}^V_\xi,Y^H_\xi)
\right\}.
\end{split}
\end{equation}
Substituting \eqref{RHHHH} and \eqref{RHVHV} into \eqref{HHex}, 
we obtain \eqref{RicHH}. 
Similarly, by simple calculations we obtain \eqref{RicVV} and \eqref{RicHV}.
\end{proof}

\section{Proof of Theorem \ref{main}}

(i)\ Using \eqref{dualT}, \eqref{dualR}, \eqref{bracket} and the formula
\begin{equation*}
d\Omega(\mathcal{X}, \mathcal{Y}, \mathcal{Z})
=\cycsum_{\mathcal{X}, \mathcal{Y}, \mathcal{Z}}
\left\{
\mathcal{X}\left(\Omega(\mathcal{Y}, \mathcal{Z})\right)
-\Omega([\mathcal{X}, \mathcal{Y}], \mathcal{Z})\right\},
\end{equation*}
we have
\begin{align}
d\Omega(X^H_\xi, Y^H_\xi, Z^H_\xi)
&=\cycsum_{X,Y,Z} R^{D^*}_g(\xi, X, Y, Z),\label{AK1}\\
d\Omega(X^H_\xi, Y^H_\xi, Z^V_\xi)
&=g(T^{D^*}(X, Y), Z),\label{AK2}\\
d\Omega(X^H_\xi, Y^V_\xi, Z^V_\xi)
&=d\Omega(X^V_\xi, Y^V_\xi, Z^V_\xi)=0.\notag
\end{align}
Here $\displaystyle \cycsum_{X, Y, Z}$ denotes the cyclic sum
with respect to $X, Y, Z$.

If we assume $d\Omega=0$, from \eqref{AK2} we obtain that $T^{D^*}=0$. 
Conversely, if $D^*$ is torsion-free, then from the first Bianchi identity,
we find that the right-hand side of \eqref{AK1} vanishes. 
This completes the proof of Theorem \ref{main} (i).

Now we remark about the symplectic structure on $T^*M$.
Let $\pi^* : T^*M\to M$ be the cotangent bundle on $M$.
We define smooth functions $z_1,\ldots,z_n$ by $z_i(\psi)=\psi_i$
on $T^*M$ for $\psi=\sum \psi_i dx^i \in T_x^*M$.
Then, $(x^1,\ldots,x^n,z_1,\ldots,z_n)$ is a local coordinate system
of $T^*M$.
$T^*M$ carries a canonical symplectic structure $\Omega^*$ locally
expressed by $\Omega^* =\sum dx^i\wedge dz_i$
(See \cite{ACdS}).

Then, we obtain the following result.

\begin{theorem}\label{cTB_symp}
The natural almost Hermitian structure $(J^D, \widetilde{g}^D)$ on $TM$
induced by $(g, D)$ is almost K\"ahler if and only if the K\"ahler form
of $(J^D, \widetilde{g}^D)$ coincides with the pull-back of
the symplectic form $\Omega^*$ on $T^*M$ by $\varphi_g$.
Here $\varphi_g : TM \rightarrow T^*M$ is the natural isomorphism
defined by $\varphi_g(X)=g(X, \cdot )$ for $X\in TM$.
\end{theorem}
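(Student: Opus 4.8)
The plan is to prove the equivalence by writing both two-forms explicitly in the natural coordinate system $(x^1,\dots,x^n,y^1,\dots,y^n)$ on $TM$ and reading off their difference. One implication needs no computation: $\Omega^*$ is symplectic, so $d\Omega^*=0$ and the pull-back $\varphi_g^*\Omega^*$ is closed; hence $\Omega=\varphi_g^*\Omega^*$ immediately forces $d\Omega=\varphi_g^*d\Omega^*=0$, i.e.\ $(J^D,\widetilde{g}^D)$ is almost K\"ahler. The content lies in the converse, and in fact the coordinate comparison below yields the full equivalence at once, via Theorem~\ref{main}(i).

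First I would express the K\"ahler form $\Omega=\widetilde{g}^D(J^D\cdot,\cdot)$ in coordinates. The definitions of $J^D$ and $\widetilde{g}^D$ give $\Omega(X^H_\xi,Y^V_\xi)=g(X,Y)$ and $\Omega(X^H_\xi,Y^H_\xi)=\Omega(X^V_\xi,Y^V_\xi)=0$. To convert this into a formula in $dx^i,dy^i$ I would compute the coframe dual to the frame $\{(\partial/\partial x^i)^H,(\partial/\partial x^i)^V\}$: a direct check shows that $dx^i$ is dual to the horizontal vectors while $\eta^i:=dy^i+\sum_{j,k}\Gamma^i_{jk}\,y^k\,dx^j$ is dual to the vertical ones. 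Hence $\Omega=\sum_{i,j}g_{ij}\,dx^i\wedge\eta^j=\sum_{i,j}g_{ij}\,dx^i\wedge dy^j+\sum_{i,j,k,l}g_{ij}\Gamma^j_{kl}\,y^l\,dx^i\wedge dx^k$, where $g_{ij}=g(\partial/\partial x^i,\partial/\partial x^j)$.

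Next I would compute the pull-back. In coordinates $\varphi_g(X)=g(X,\cdot)$ reads $x^i\circ\varphi_g=x^i$ and $z_i\circ\varphi_g=\sum_j g_{ij}\,y^j$, so pulling back $\Omega^*=\sum_i dx^i\wedge dz_i$ gives $\varphi_g^*\Omega^*=\sum_i dx^i\wedge d(\sum_j g_{ij}\,y^j)=\sum_{i,j}g_{ij}\,dx^i\wedge dy^j+\sum_{i,j,k}\frac{\partial g_{ij}}{\partial x^k}\,y^j\,dx^i\wedge dx^k$. The two $dx\wedge dy$ parts coincide, so $\Omega-\varphi_g^*\Omega^*$ is a sum of $dx^i\wedge dx^k$ terms whose coefficients are linear in the fibre coordinate $\xi=(y^l)$. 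Antisymmetrizing in $(i,k)$, the coefficient of $dx^i\wedge dx^k$ is $\sum_l[(\Gamma_{i,kl}-\Gamma_{k,il})-(\frac{\partial g_{il}}{\partial x^k}-\frac{\partial g_{kl}}{\partial x^i})]\,y^l$, where $\Gamma_{i,kl}=g(D_{\partial/\partial x^k}\partial/\partial x^l,\partial/\partial x^i)$. Using the defining relation \eqref{def_dualconn} of $D^*$ to rewrite $\partial g_{il}/\partial x^k$ in terms of Christoffel symbols, this bracket equals $-g(T^{D^*}(\partial/\partial x^k,\partial/\partial x^i),\partial/\partial x^l)$. Therefore $\Omega-\varphi_g^*\Omega^*$ vanishes at every $\xi$ if and only if $T^{D^*}=0$, and by Theorem~\ref{main}(i) the latter is precisely the almost K\"ahler condition; combining the two equivalences proves the theorem.

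The main obstacle is this final identification of the residual $dx\wedge dx$ coefficient with the torsion of the dual connection: one must differentiate $g$ through \eqref{def_dualconn}, express the result in Christoffel symbols and antisymmetrize, keeping the index bookkeeping straight. Because the coefficients are \emph{arbitrary} linear functions of $\xi$, their simultaneous vanishing is genuinely equivalent to $T^{D^*}=0$, with no trace or contraction lost, which is what makes the equivalence sharp rather than a weaker averaged statement. As a consistency check, $g(T^{D^*}(X,Y),Z)$ is exactly the tensor already exhibited in \eqref{AK2} as the horizontal--vertical component of $d\Omega$; moreover the same computation identifies $\varphi_g^*\Omega^*$ with the K\"ahler form of the Levi--Civita-induced structure $(J^\nabla,\widetilde{g}^\nabla)$, which is the canonical closed model against which the $D$-dependent form $\Omega$ is being measured.
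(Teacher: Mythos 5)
Your proof is correct and takes essentially the same route as the paper's: both compute $\Omega$ and $\varphi_g^*\Omega^*$ in the coordinates $(x^i,y^j)$, observe that they agree except in the $dx\wedge dx$ component, identify that residual coefficient with $g(T^{D^*}(\cdot,\cdot),\cdot)$, and conclude via Theorem~\ref{main}~(i). The only cosmetic differences are that you derive the torsion identification directly from the defining relation \eqref{def_dualconn} rather than citing \eqref{dualT}, and you add the (true but not needed) remark that the implication from $\Omega=\varphi_g^*\Omega^*$ to $d\Omega=0$ is immediate from closedness of $\Omega^*$.
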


\begin{proof}
Let $\varphi^*_g(\Omega^*)$ be the pull-back of the symplectic form 
$\Omega^*$ by $\varphi_g$.
Straightforward computations show that
\begin{equation}\label{pb_cT}
\begin{split}
\varphi_g^*(\Omega^*)
\left(\frac{\partial}{\partial x^i},\frac{\partial}{\partial x^j}\right)
&=\sum_k\left(
\frac{\partial g_{ik}}{\partial x^j}-\frac{\partial g_{jk}}{\partial x^i}
\right)y^k,\\
\varphi_g^*(\Omega^*)
\left(\frac{\partial}{\partial x^i},\frac{\partial}{\partial y^j}\right)
&=g_{ij},\\
\varphi_g^*(\Omega^*)
\left(\frac{\partial}{\partial y^i},\frac{\partial}{\partial y^j}\right)
&=0,
\end{split}
\end{equation}
and
\begin{equation}\label{K_T}
\begin{split}
\Omega\left(\frac{\partial}{\partial x^i},\frac{\partial}{\partial x^j}\right)
&=\sum_{k,l}(\Gamma^l_{jk}g_{li}-\Gamma^l_{ik}g_{lj})y^k,\\
\Omega\left(\frac{\partial}{\partial x^i},\frac{\partial}{\partial y^j}\right)
&=g_{ij,}\\
\Omega\left(\frac{\partial}{\partial y^i},\frac{\partial}{\partial y^j}\right)
&=0.
\end{split}
\end{equation}
Hence, from \eqref{pb_cT} and \eqref{K_T},
$\varphi_g^*(\Omega^*)=\Omega$ if and only if
\begin{equation}\label{pb_cond}
\frac{\partial g_{ik}}{\partial x^j}-\frac{\partial g_{jk}}{\partial x^i}
=\sum_{l}\left(\Gamma^l_{jk}g_{li}-\Gamma^l_{ik}g_{lj}\right).
\end{equation}
From \eqref{dualT}, we find that \eqref{pb_cond} implies $T^*=0$.
\end{proof}
\vspace{5pt}

(ii)\ 
The integrability condition of the almost complex structure is equivalent
to the condition that the Nijenhuis tensor $N$ vanishes
(See \cite[Chapter IX]{KN2}).
Here $N$ is a $(1, 2)$-tensor field defined by
\begin{equation*}
N(\mathcal{X},\mathcal{Y})=[J^D\mathcal{X}, J^D\mathcal{Y}]
-J^D[J^D\mathcal{X}, \mathcal{Y}]-J^D[\mathcal{X}, J^D\mathcal{Y}] 
-[\mathcal{X},\mathcal{Y}]
\end{equation*}
for $\mathcal{X}, \mathcal{Y}\in \mathfrak{X}(TM)$. 
From the definition, $N$ satisfies
\begin{equation*}
N(\mathcal{Y}, \mathcal{X})=-N(\mathcal{X}, \mathcal{Y}),\hspace{20pt}
N(J^D\mathcal{X}, \mathcal{Y})=-J^DN(\mathcal{X}, \mathcal{Y}).
\end{equation*}
Hence, in our situation, in order to find the condition that $N=0$
it is enough to compute $N(X^H_\xi, Y^H_\xi)$.
From straightforward computations, we have
\begin{equation*}
N(X^H_\xi, Y^H_\xi)=\left(T(X,Y)\right)^H_\xi+\left(R(X, Y)\xi\right)^V_\xi.
\end{equation*}
Hence, $N=0$ if and only if $R^D=0$ and $T^D=0$, i.e. $D$ is a 
flat connection \cite{TO}.
From the above argument and Theorem \ref{main} (i), 
we obtain Theorem \ref{main} (ii).
\vspace{5pt}

(iii)\ We show the following fact;
\begin{theorem}\label{thm_Jinv}
If the Ricci tensor $\widetilde{\Ric}$ of $\widetilde{g}^D$ is
$J^D$-invariant or $J^D$-anti-invariant, then $R^D=0$.
\end{theorem}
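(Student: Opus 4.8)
The plan is to read the conclusion directly off the explicit Ricci formulae \eqref{RicHH} and \eqref{RicVV}, exploiting the fact that both are low-degree polynomials in the fibre variable $\xi$. Recall that $J^D X^H_\xi = X^V_\xi$ and $J^D X^V_\xi = -X^H_\xi$, so that $J^D$-invariance of $\widetilde{\Ric}$ forces $\widetilde{\Ric}(X^V_\xi, Y^V_\xi) = \widetilde{\Ric}(X^H_\xi, Y^H_\xi)$, while $J^D$-anti-invariance forces $\widetilde{\Ric}(X^V_\xi, Y^V_\xi) = -\widetilde{\Ric}(X^H_\xi, Y^H_\xi)$. In either case it suffices to compare the $(V,V)$ and $(H,H)$ blocks; the mixed block \eqref{RicHV} will not be needed.

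Next I would separate each side according to its homogeneity in $\xi$. In \eqref{RicHH} the only $\xi$-dependent term is $-\frac{1}{2}\sum_i R^D_g(R^D(X,e_i)\xi,\xi,Y,e_i)$, which is homogeneous of degree $2$ in $\xi$; every remaining term ($\Ric^{\nabla}$, the second-derivative terms, and the $Dg\cdot Dg$ term) is evaluated at the base point and is independent of $\xi$. Likewise, in \eqref{RicVV} the only $\xi$-dependent term is $\frac{1}{4}\sum_{i,j}R^D_g(X,\xi,e_i,e_j)R^D_g(Y,\xi,e_i,e_j)$, again homogeneous of degree $2$, the rest being $\xi$-independent. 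Since the $J^D$-(anti-)invariance identity holds at every $\xi\in T_xM$ and both sides are polynomial in $\xi$, I may equate the degree-$2$ homogeneous parts separately. Rewriting everything through $R^D_g(W,Z,X,Y)=g(R^D(X,Y)Z,W)$, this yields
\begin{equation*}
\frac{1}{4}\sum_{i,j} g(R^D(e_i,e_j)\xi, X)\,g(R^D(e_i,e_j)\xi, Y)
= \mp \frac{1}{2}\sum_i g(R^D(X,e_i)\xi,\, R^D(Y,e_i)\xi),
\end{equation*}
where the upper sign corresponds to the $J^D$-invariant case and the lower sign to the anti-invariant one.

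Finally I would set $X=Y=e_k$, sum over the orthonormal basis $\{e_k\}$, and write $S(\xi):=\sum_{i,j}\lvert R^D(e_i,e_j)\xi\rvert^2\ge 0$. A Parseval identity on the left and a relabelling of indices on the right collapse the displayed equation to $\tfrac14 S(\xi)=\mp\tfrac12 S(\xi)$. Because $\tfrac14$ and $\mp\tfrac12$ never coincide, this forces $S(\xi)=0$ for every $\xi$. At this point the positive-definiteness of the Riemannian metric $g$ is essential: a vanishing sum of squares forces $R^D(e_i,e_j)\xi=0$ for all $i,j$ and all $\xi$, and since $\{e_i\}$ spans $T_xM$ we conclude $R^D=0$.

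The argument is essentially formal once the curvature formulae are in hand, so the only genuine difficulty I anticipate is bookkeeping: pinning down the exact numerical coefficients $\tfrac14$ versus $\tfrac12$ and checking that the index contractions really produce the \emph{same} quantity $S(\xi)$ on both sides. It is precisely the mismatch of these two constants — not any sign cancellation — that drives the conclusion, so the whole proof hinges on getting them right.
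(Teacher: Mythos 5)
Your proposal is correct and follows essentially the same route as the paper: both compare the $(H,H)$ and $(V,V)$ blocks of $\widetilde{\Ric}$, isolate the parts quadratic in $\xi$, trace over an orthonormal basis, and conclude from the incompatible coefficients $\tfrac14$ versus $\mp\tfrac12$ that the curvature must vanish. The only cosmetic difference is that you keep $\xi$ arbitrary and obtain $S(\xi)=0$ pointwise, whereas the paper also substitutes $\xi=e_j$ and sums over $j$, yielding $|R^D|^2_g=0$ in one step.
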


\begin{proof}
The assumption implies that $\widetilde{\Ric}$ satisfies
\begin{equation}\label{E-condi}
\widetilde{\Ric}(X^H_\xi, Y^H_\xi)=\pm \widetilde{\Ric}(X^V_\xi, Y^V_\xi)
\end{equation}
for any $X, Y, \xi \in TM$, and in particular
\begin{equation}\label{E-condi2}
-\frac{1}{2}\sum_i R^D_g(R^D(X,e_i)\xi,\xi,Y,e_i)
=\pm
\frac{1}{4}\sum_{i,j} R^D_g(X,\xi,e_i,e_j){\ }R^D_g(Y,\xi,e_i,e_j)
\end{equation}
which is the $\xi$-dependent part of \eqref{E-condi}. 
Substituting $X=Y=e_i$ and $\xi=e_j$ into \eqref{E-condi2}, 
and summing on $i$ and $j$ we have
\begin{equation*}
-\frac{1}{2}|R^D|^2_g=\pm \frac{1}{4}|R^D|^2_g
\end{equation*}
from which we obtain that $|R^D|^2_g=0$, i.e. $R^D=0$.
\end{proof}

If $(TM, J^D, \widetilde{g}^D)$ is Einstein, the Ricci tensor is 
$J^D$-invariant. 
Hence, Theorem \ref{main} (iii) is obtained as the corollary of 
Theorem \ref{thm_Jinv}.

\section{Examples}

\subsection{A 1-parameter family of almost K\"ahler structures
on the tangent bundle}

Let $(M, g)$ be the Riemannian product of the unit circle $(S^1, g_0)$
with the angular coordinate $\theta$
and a space of positive constant curvature $(N^{n-1}, g_N)$
and let $\omega=k d\theta$ be a 1-form on $M$ ($k\in \RN$).
Also let $D$ be a torsion-free connection $D$ on $M$ satisfying
$Dg=\omega\otimes g$.
Such a connection is uniquely determined for given $g$ and $\omega$.

When $\displaystyle k=\pm\frac{2s_N}{\sqrt{(n-1)(n-2)}}$,
the curvature of $D$ vanishes, i.e. $D$ is a flat connection.
Here $s_N$ is the scalar curvature of $g_N$.

Fix a constant $k$ such that $R^D=0$.
For $\lambda \in \RN$ we define the metric $g_\lambda$ on $M$ by 
\begin{equation*}\label{def_nm}
g_\lambda:=g+\frac{\lambda}{|\omega|^2_g}\omega\otimes \omega.
\end{equation*}
Let $D^*_{\lambda}$ be the dual connection of $D$ with respect to 
$g_\lambda$. 
Then $(g_{\lambda}, D^*_\lambda)$ induces a 1-parameter family of 
almost K\"ahler structures $(J_\lambda, \widetilde{g}_\lambda)$ on $TM$
parametrized by $\lambda$.
Moreover, $(g_{\lambda}, D^*_\lambda)$ satisfies the condition
\eqref{AKnonK}.

Now we find the condition that $\widetilde{g}^D$ is Einstein.
In our situation, from Proposition \ref{RicSas}
we can express $\widetilde{\Ric}$ by
\begin{equation}\label{WRicHH}
\begin{split}
\widetilde{\Ric}(X^H_\xi,Y^H_\xi)
=&\frac{|\omega|^2_g}{8}\left(2(n-2)-\frac{\lambda^2}{\lambda+1}\right)
g_\lambda(X,Y)\\
&+\frac{1}{8}(\lambda+2)\left\{\lambda-2(n-1)\right\}\omega(X)\omega(Y),
\end{split}
\end{equation}
\begin{equation}\label{WRicVV}
\widetilde{\Ric}(X^V_\xi,Y^V_\xi)
=\frac{|\omega|^2_g}{8}\frac{\lambda^2+2\lambda-2n}{\lambda+1}
g_\lambda(X,Y)-\frac{n\lambda(\lambda+2)}{8}\omega(X)\omega(Y),
\end{equation}
\begin{equation*}
\widetilde{\Ric}(X^H_\xi,Y^V_\xi)=0.
\end{equation*}
From \eqref{WRicHH} and \eqref{WRicVV},
$\widetilde{\Ric}=k{\ }\widetilde{g}_{\lambda}$ 
if and only if $\lambda=-2$. 
Then, $D^*_{(-2)}$ is torsion free
and $g_{(-2)}$ is a pseudo-Riemannian metric. 
Hence $(TM, J_{(-2)}, \widetilde{g}_{(-2)})$ is a pseudo-K\"ahler Einstein
manifold.

\begin{remark}
We can apply a similar argument to compact flat Weyl manifolds
(See \cite{SaH2} for details).
\end{remark}

At the end of this subsection we pose the following problem;

\begin{problem}
Do there exist pairs $(g, D)$ where $g$ is a positive definite metric
and $D$ is an affine connection such that the natural almost Hermitian
structure $(J^D, \widetilde{g}^D)$ is strictly almost K\"ahler 
Einstein?
\end{problem}

\subsection{On the manifold of 
multivariate normal distributions on $\RN^2$}

In this subsection, we consider K\"ahler structures on $TM$.

From Theorem \ref{main}, we can construct K\"ahler structures on tangent
bundles by using Hessian structures.
We recall that when an affine connection $D$
and its dual connection $D^*$ with respect to $g$ are both flat,
we call $(M, g, D)$ a {\it Hessian manifold}, and in particular $g$
a {\it Hessian metric} on $(M, D)$.
This condition is equivalent to the following condition:
$D$ is a flat connection on $M$ and there exists a function $\varphi$ on
$M$ such that $g=Dd\varphi$.
We call the function $\varphi$ the {\it potential} of the Hessian 
metric $g$ with respect to $D$.

We define a 1-form $\alpha$ and a symmetric $(0,2)$-tensor $\beta$, 
which are called the {\it first Koszul form} 
and the {\it second Koszul form} respectively, by
\begin{equation*}
\alpha(X)=g(D_Xdv_g, dv_g)\ \ (X\in TM),\quad \beta=D\alpha.
\end{equation*}
Here $dv_g$ is the volume form of $g$.
Then we can consider the notion of ``Hesse-Einstein''.
If the 2nd Koszul form is proportional to the Hessian metric $g$,
then we say that a Hessian manifold is {\it Hesse-Einstein}.

\begin{remark}
If $\dim M\ge 2$ and $\beta=c g$, 
then $c$ must be a constant.
\end{remark}

Hesse-Einstein manifolds are characterized as follows:
\begin{theorem}[{\cite[Thm. 3.1.6]{Sh}}]
Let $(M, g, D)$ be a Hessian manifold. 
Then the Ricci tensor $\widetilde{\Ric}$ of $\tilde{g}^D$ satisfies
\begin{equation*}
\widetilde{\Ric}(X^H_\xi,Y^H_\xi)=\widetilde{\Ric}(X^V_\xi,Y^V_\xi)=
-\beta(X,Y),\quad
\widetilde{\Ric}(X^H_\xi,Y^V_\xi)=0.
\end{equation*}
In particular, the K\"ahler structure on $TM$ induced by $(g, D)$ 
is Einstein if and only if $(M, g, D)$ is Hesse-Einstein. 
\end{theorem}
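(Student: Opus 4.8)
The plan is to start from the general Ricci curvature formulae of the Sasaki metric in Proposition~\ref{RicSas} and specialize them to the Hessian setting, where $D$ is flat (so $R^D=0$ and $T^D=0$) and $g=Dd\varphi$ for a potential $\varphi$. Under flatness, every term in \eqref{RicHH}, \eqref{RicVV} and \eqref{RicHV} involving $R^D$ vanishes immediately; in particular the cross term $\widetilde{\Ric}(X^H_\xi,Y^V_\xi)=0$ reduces to a statement about derivatives of $D^*$-torsion, and since $D^*$ is also flat (hence torsion-free) this vanishes as well. What remains of $\widetilde{\Ric}(X^H_\xi,Y^H_\xi)$ and $\widetilde{\Ric}(X^V_\xi,Y^V_\xi)$ are purely the terms built from the covariant derivatives $(D_{\bullet}g)$ and the second derivatives $(D^2_{\bullet\bullet}g)$, together with the Levi-Civita Ricci tensor $\Ric^\nabla$ in the horizontal case. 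The core of the proof is therefore to identify these surviving combinations with $-\beta(X,Y)$.

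The second step is to rewrite the remaining quantities in terms of the Koszul forms. I would work in a flat affine coordinate system adapted to $D$ (so that the coefficients $\Gamma^k_{ij}$ of $D$ vanish), in which $g_{ij}=\partial_i\partial_j\varphi$ and the cubic tensor $C=Dg$ has components $C_{ijk}=\partial_i\partial_j\partial_k\varphi$ fully symmetric. In these coordinates the first Koszul form is $\alpha_i=\tfrac12\, g^{jk}C_{ijk}=\partial_i\log\sqrt{\det g}$, and the second Koszul form is $\beta_{ij}=\partial_i\alpha_j=\partial_i\partial_j\log\sqrt{\det g}$. The key algebraic identities I expect to need are the standard Hessian-geometry relations expressing $\beta$ through $C$, namely
\begin{equation*}
\beta_{ij}=\tfrac12\,g^{kl}(D C)_{ijkl}-\tfrac14\,g^{kp}g^{lq}C_{ikl}\,C_{jpq},
\end{equation*}
which follows by differentiating $\alpha_j=\tfrac12 g^{kl}C_{jkl}$ and using $\partial_i g^{kl}=-g^{kp}g^{lq}C_{ipq}$. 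Matching the $\sum_i (D^2_{e_ie_i}g)$ and $\sum_{i,j}(D_{e_i}g)(\cdot,e_j)(D_{e_i}g)(\cdot,e_j)$ terms in \eqref{RicVV} against this identity should directly give $\widetilde{\Ric}(X^V_\xi,Y^V_\xi)=-\beta(X,Y)$.

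For the horizontal component the additional input is that $\Ric^\nabla$ must be re-expressed using the difference tensor $\gamma^D=D-\nabla$; since $D$ is flat, the curvature of $\nabla$ is entirely captured by $\gamma^D$ via the standard formula relating the curvatures of two connections differing by a tensor, and I expect the $\Ric^\nabla$ contribution to combine with the $D_{\gamma}g$ and $(D_Xg)(D_Yg)$ terms in \eqref{RicHH} to reproduce exactly the same $-\beta(X,Y)$. The main obstacle is precisely this horizontal bookkeeping: one must carefully convert between the $\nabla$-language of \eqref{RicHH} and the purely $D$-affine language in which $\beta$ is naturally computed, since $\gamma^D$ is symmetric and satisfies $2g(\gamma^D(X,Y),Z)=(D_Xg)(Y,Z)+\ldots$, so several terms conspire to cancel or double. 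Once the horizontal and vertical computations both yield $-\beta$, the final Einstein equivalence is immediate: an Einstein condition forces $\widetilde{\Ric}=\lambda\widetilde{g}^D$, and comparing with $\widetilde{\Ric}(X^H_\xi,Y^H_\xi)=-\beta(X,Y)=\lambda g(X,Y)$ gives $\beta=-\lambda g$, i.e. the Hesse-Einstein condition, while conversely $\beta=cg$ produces a $J^D$-invariant Ricci tensor proportional to $\widetilde{g}^D$.
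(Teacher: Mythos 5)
Your strategy is the right one, and note that the paper itself offers no proof of this statement at all (it simply quotes Shima's book), so deriving it by specializing the paper's own Proposition~\ref{RicSas} to the Hessian case is a legitimately self-contained route. The skeleton is correct: on a Hessian manifold $R^D=0$ and $T^D=T^{D^*}=0$, so every curvature term in \eqref{RicHH}--\eqref{RicHV} drops out; in particular \eqref{RicHV} vanishes identically because \emph{each} of its terms carries a factor $R^D$ or $DR^D$ (your appeal to derivatives of the $D^*$-torsion is unnecessary, though harmless). Your outline of the horizontal case is also viable: in $D$-affine coordinates, with $C_{ijk}=\partial_i\partial_j\partial_k\varphi$, $C_{ijkl}=\partial_i\partial_j\partial_k\partial_l\varphi$ and $\nabla=D+A$, $A_{ij}{}^k=\tfrac12 g^{kl}C_{ijl}$, a direct computation gives
\begin{equation*}
\Ric^{\nabla}(X,Y)=\tfrac12\, g^{kl}C_{XYkl}-\tfrac12\,\alpha_p g^{pl}C_{XYl}
-\tfrac14\, g^{ac}g^{bd}C_{Xab}C_{Ycd}-\beta(X,Y),
\end{equation*}
and the three non-curvature terms surviving in \eqref{RicHH} are exactly the negatives of the first three terms above, leaving $-\beta(X,Y)$ as claimed.

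However, the key algebraic identity you state carries a wrong coefficient, and as written your matching step for the vertical component fails. Differentiating $\alpha_j=\tfrac12 g^{kl}C_{jkl}$ with $\partial_i g^{kl}=-g^{kp}g^{lq}C_{ipq}$ produces
\begin{equation*}
\beta_{ij}=\tfrac12\,g^{kl}(DC)_{ijkl}-\tfrac12\,g^{kp}g^{lq}C_{ikl}C_{jpq},
\end{equation*}
i.e.\ the quadratic term enters with coefficient $-\tfrac12$, not the $-\tfrac14$ you wrote (your own differentiation rule yields $\tfrac12\cdot(-1)=-\tfrac12$; a $\tfrac14$ would only arise from a product of two $A$'s, as in curvature formulas). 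The factor matters: specializing \eqref{RicVV} gives
\begin{equation*}
\widetilde{\Ric}(X^V_\xi,Y^V_\xi)
=-\tfrac12\, g^{kl}C_{XYkl}+\tfrac12\, g^{kp}g^{lq}C_{Xkl}C_{Ypq},
\end{equation*}
which equals $-\beta(X,Y)$ precisely under the corrected identity, whereas with your coefficient it would differ from $-\beta(X,Y)$ by $\tfrac14 g^{kp}g^{lq}C_{Xkl}C_{Ypq}$, and the theorem would appear false. Once this coefficient is repaired, the whole argument goes through, and the concluding Einstein equivalence is, as you say, immediate from $\widetilde{\Ric}$ being $-\beta$ on both the horizontal and vertical blocks while $\widetilde{g}^D$ is $g$ on both.
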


Using a matrix-valued linear map we can construct a Hessian structure
as follows:
Let $\mathcal{S}_n$ be the set of all symmetric $n\times n$-matrices 
and $\mathcal{S}^+_n$ the subset of all positive definite symmetric
matrices in $\mathcal{S}_n$.
Let $\rho$ be a linear injection 
from a domain $U\subset\RN^m$ to $\mathcal{S}_n$ which satisfies that 
$\rho(U)\subset \mathcal{S}^+_n$.
For $(\mu, \xi)\in \RN^m\times U$, we set a function
\begin{equation}\label{def_rhoPD}
p(x; \mu, \xi)
:=\sqrt{\frac{\det \rho(\xi)}{(2\pi)^n}}
\exp \left\{
-\frac{\trp{(x-\mu)} \rho(\xi) (x-\mu)}{2}
\right\}\quad (x\in \RN^n).
\end{equation}
Then the set $\mathscr{P}_n^\rho=\{p(x; \mu, \xi){\ }|{\ }
(\mu, \xi)\in \RN^n\times U\}$ is a family of probability distributions
on $\RN^n$ parametrized by $(\mu, \xi)$. 
We call $\mathscr{P}_n^\rho$ the {\it statistical model induced by $\rho$}.
$\mathscr{P}_n^\rho$ is a smooth manifold of dimension $(n+m)$ and
has the Riemannian metric $g_\rho$ which is called the Fisher metric
(See \cite{AN}).
Moreover $\mathscr{P}_n^\rho$ admits a flat connection $D$ such that
$(g_\rho, D)$ is a Hessian structure on $\mathscr{P}_n^\rho$.

\begin{proposition}[{\cite[Prop. 6.2.1]{Sh}}]\label{PP_pot}
Let $\mathscr{P}_n^\rho=\{p(x; \mu, \xi){\ }|{\ }
(\mu, \xi)\in \RN^n\times U\}$ be the statistical model induced 
by $\rho$.
We set $\theta=\rho(\xi)\mu$.
Let $D$ is the standard flat connection on
$\{ (\theta, \xi)\in \RN^n\times U\}$.
Then the Fisher metric $g_\rho$ on 
$\mathscr{P}^\rho_n$ is the Hessian metric on $(\mathscr{P}_n^\rho, D)$ 
whose potential is given by
\begin{equation*}
\varphi(\theta, \xi)=\frac{1}{2}
\{\trp{\theta}{\rho(\xi)}^{-1}\theta-\log \det \rho(\xi)\}.
\end{equation*}
\end{proposition}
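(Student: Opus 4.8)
The plan is to realize $\mathscr{P}_n^\rho$ as an exponential family in the coordinates $(\theta,\xi)$ and then to invoke the classical fact that for such a family the Fisher metric is the Hessian, in the natural parameters, of the log-partition function (cf.\ \cite{AN}). First I would rewrite the log-likelihood. Put $R=\rho(\xi)$ and $\ell=\log p(x;\mu,\xi)$. Expanding $\trp{(x-\mu)}R(x-\mu)=\trp{x}Rx-2\trp{\mu}Rx+\trp{\mu}R\mu$ and substituting $\theta=R\mu$ (so that $\trp{\mu}R=\trp{\theta}$ and $\trp{\mu}R\mu=\trp{\theta}R^{-1}\theta$) puts $p$ into exponential-family form:
\begin{equation*}
\ell=\trp{\theta}x-\frac{1}{2}\tr\!\bigl(R\,x\,\trp{x}\bigr)-\psi(\theta,\xi),
\end{equation*}
where the log-partition function is
\begin{equation*}
\psi(\theta,\xi)=\frac{1}{2}\trp{\theta}R^{-1}\theta-\frac{1}{2}\log\det R+\frac{n}{2}\log(2\pi)=\varphi(\theta,\xi)+\frac{n}{2}\log(2\pi).
\end{equation*}
Since $R\in\mathcal{S}_n^+$ is invertible, $\mu\mapsto\theta=R\mu$ is for each fixed $\xi$ a linear isomorphism of $\RN^n$, so $(\theta,\xi)$ is a genuine coordinate system on $\mathscr{P}_n^\rho$ and $\varphi$ is exactly the function in the statement.

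The key observation is that the $x$-dependent part of $\ell$ is affine in the parameters. The term $\trp{\theta}x$ is linear in $\theta$, and because $\rho$ is linear we may write $\rho(\xi)=\sum_\alpha \xi^\alpha A_\alpha$ with fixed $A_\alpha\in\mathcal{S}_n$, so that $-\frac{1}{2}\tr(R\,x\,\trp{x})=\sum_\alpha \xi^\alpha\bigl(-\frac{1}{2}\tr(A_\alpha x\,\trp{x})\bigr)$ is linear in $\xi$. Hence, writing $u=(\theta,\xi)$ and $\partial_a=\partial/\partial u^a$, all second parameter-derivatives of these $x$-dependent terms vanish and
\begin{equation*}
\partial_a\partial_b\,\ell=-\,\partial_a\partial_b\,\psi,
\end{equation*}
which no longer depends on $x$.

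Finally I would recall the definition of the Fisher metric together with the standard identity
\begin{equation*}
(g_\rho)_{ab}=E_p\!\left[\partial_a\ell\,\partial_b\ell\right]=-E_p\!\left[\partial_a\partial_b\ell\right],
\end{equation*}
valid for this Gaussian family by differentiating $\int p\,dx=1$ twice under the integral sign. Combining this with the previous display, and using that $\psi$ is deterministic, gives $(g_\rho)_{ab}=\partial_a\partial_b\psi=\partial_a\partial_b\varphi$. Since $(\theta,\xi)$ are affine coordinates for the standard flat connection $D$, the right-hand side is precisely $(Dd\varphi)_{ab}$; therefore $g_\rho=Dd\varphi$ with $D$ flat, i.e.\ $g_\rho$ is the Hessian metric on $(\mathscr{P}_n^\rho,D)$ with potential $\varphi$. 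The only steps needing care are the verification that $(\theta,\xi)$ really furnish coordinates and the justification of differentiating under the integral sign; the substantive idea --- that passing to the natural parameters makes $\ell$ affine in the parameters, collapsing the Fisher metric onto the Hessian of $\psi$ --- does the real work.
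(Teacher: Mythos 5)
Your proposal is correct. Note that the paper itself offers no proof of this proposition --- it is imported verbatim from \cite[Prop.~6.2.1]{Sh} --- so the relevant comparison is with the standard argument in the information-geometry literature, and yours is exactly that argument: pass to the natural parameters $(\theta,\xi)=(\rho(\xi)\mu,\xi)$, observe that the family becomes exponential with log-partition function $\psi=\varphi+\tfrac{n}{2}\log(2\pi)$, and use $E_p[\partial_a\ell\,\partial_b\ell]=-E_p[\partial_a\partial_b\ell]=\partial_a\partial_b\psi=\partial_a\partial_b\varphi$ together with the fact that $(\theta,\xi)$ are affine coordinates for $D$. The two points you flag as needing care (that $(\theta,\xi)$ is a genuine coordinate system, which follows from invertibility of $\rho(\xi)\in\mathcal{S}_n^+$, and differentiation under the integral, which is harmless for Gaussian densities) are indeed the only loose ends, and both are handled adequately.
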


\begin{example}
For the linear injection $\rho:\RN_+\rightarrow\mathcal{S}_n$ defined by
\begin{equation*}
\rho(t)=t I_n{\ }{\ }(\mbox{$I_n$ is the unit matrix}),
\end{equation*}
$(T\mathscr{P}^\rho_n, J^{D^*},\widetilde{g_\rho}^{D^*})$ has constant 
holomorphic sectional curvature and consequently
this is K\"ahler-Einstein (\cite[Problem 6.2.1]{Sh}).
Here $D^*$ is the dual connection of $D$ with respect to the Fisher 
metric $g_\rho$.
In the case that $n=1$, Sato \cite{SaT} explicitly computes the Ricci tensor of 
$(T\mathscr{P}^\rho_1, J^{D^*},\widetilde{g_\rho}^{D^*})$.
\end{example}

Then, the following problem naturally arises;
\begin{problem}
How many Hessian manifolds which are Hesse-Einstein do there
exist in the class of Hessian manifolds induced by matrix-valued
linear injections mentioned above?
\end{problem}
We give a partial solution for this problem as follows.

\begin{theorem}\label{Hmain}
For any linear injection $\rho$ from a domain $U\subset \RN^2$ into 
$\mathcal{S}_2$ such that $\rho(U)\subset \mathcal{S}^+_2$, 
$(\mathscr{P}_2^\rho, g_\rho,D^*)$ is always Hesse-Einstein. 
Hence, $(T\mathscr{P}_2^\rho, J^{D^*}, \widetilde{g_\rho}^{D^*})$ 
is a K\"ahler Einstein manifold. 
Here $D^*$ is the dual connection of $D$ with respect to $g_\rho$.
\end{theorem}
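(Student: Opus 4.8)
The plan is to reduce the Einstein statement to a single Hesse--Einstein condition and then verify that condition by computing one determinant. Since $(g_\rho,D)$ is a Hessian structure (Proposition~\ref{PP_pot}), both $D$ and $D^*$ are flat, so Theorem~\ref{main}~(ii) already gives that $(J^{D^*},\widetilde{g_\rho}^{D^*})$ is K\"ahler; by the Hesse--Einstein characterization \cite[Thm.~3.1.6]{Sh} applied to the Hessian manifold $(\mathscr{P}_2^\rho,g_\rho,D^*)$, this K\"ahler metric is Einstein if and only if the second Koszul form $\beta^*$ of $(g_\rho,D^*)$ is proportional to $g_\rho$. So it suffices to prove $\beta^*=c\,g_\rho$ for a constant $c$. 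I would check this in the $D^*$-affine coordinates $\eta_i=\partial_i\varphi$, in which $g_\rho$ is the $D^*$-Hessian of the Legendre dual potential $\varphi^*$ and $\beta^*$ is the $D^*$-Hessian of $\psi^*:=-\tfrac12\log\det g_\rho$ (the first Koszul form of $(g_\rho,D^*)$ being $d\psi^*$); the condition $\beta^*=c\,g_\rho$ is then equivalent to $\psi^*-c\,\varphi^*$ being $D^*$-affine.

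Write $P=\rho(\xi)$ and $A_\alpha=\partial\rho/\partial\xi^\alpha$, so that $P=\sum_\alpha\xi^\alpha A_\alpha$ and the injectivity of $\rho$ means $A_1,A_2$ are linearly independent. Starting from $\varphi=\tfrac12\bigl(\trp{\theta}P^{-1}\theta-\log\det P\bigr)$, a direct computation of the Legendre transform $\varphi^*=\sum_i\eta_i x^i-\varphi$ gives the clean identity
\[
\varphi^*=\tfrac12\log\det P-\tfrac{n}{2}.
\]
Everything then rests on the following Key Lemma: $\det g_\rho=C\,(\det P)^{-(n+1)}$ for a positive constant $C$. Granting it, $\psi^*=\tfrac{n+1}{2}\log\det P+\mathrm{const}=(n+1)\varphi^*+\mathrm{const}$, so $\psi^*-(n+1)\varphi^*$ is constant, hence $D^*$-affine; for $n=2$ this yields $\beta^*=3\,g_\rho$ and proves Hesse--Einstein.

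To prove the Key Lemma I would put $g_\rho=Dd\varphi$ in block form in the $\theta$- and $\xi$-variables. The $\theta\theta$-block equals $P^{-1}$, and a Schur-complement computation shows that the $\theta$-dependence cancels completely, leaving
\[
\det g_\rho=(\det P)^{-1}\det S,\qquad S_{\alpha\beta}=\tfrac12\tr(P^{-1}A_\alpha P^{-1}A_\beta).
\]
Thus $2S$ is the Gram matrix of $A_1,A_2$ for the affine-invariant metric $\langle X,Y\rangle_P=\tr(P^{-1}XP^{-1}Y)$ on $\mathcal{S}_2^+$, restricted to the plane $V=\mathrm{span}\{A_1,A_2\}\subset\mathcal{S}_2$.

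The main obstacle is the evaluation of $\det S$, and this is the only step where $n=2$ is essential. I would use two facts particular to $2\times2$ matrices. First, by scaling and congruence invariance (verified at $P=I$) the full Gram determinant of $\langle\,\cdot\,,\,\cdot\,\rangle_P$ on the $3$-dimensional space $\mathcal{S}_2$ equals $2(\det P)^{-3}$; the standard formula relating a hyperplane Gram determinant to that of the ambient space then gives $\det(2S)=2(\det P)^{-3}\,\tr(MPMP)$, where $M\in\mathcal{S}_2$ is the matrix representing the normal $\vec A_1\times\vec A_2$ of $V$ under $\mathcal{S}_2\cong\RN^3$, so that $\tr(MA_\alpha)=0$. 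Second, the $2\times2$ identity $\tr\!\bigl((MP)^2\bigr)=(\tr MP)^2-2\det M\,\det P$, together with $\tr(MP)=\sum_\alpha\xi^\alpha\tr(MA_\alpha)=0$ (valid precisely because $P\in V$), collapses this to $\tr(MPMP)=-2\det M\,\det P$. Hence $\det S=-\det M\,(\det P)^{-2}$ and $\det g_\rho=-\det M\,(\det P)^{-3}$, with $C=-\det M>0$ because $V$ meets the positive-definite cone. Combining with the preceding paragraphs yields $\beta^*=3\,g_\rho$, so $(\mathscr{P}_2^\rho,g_\rho,D^*)$ is Hesse--Einstein and $(T\mathscr{P}_2^\rho,J^{D^*},\widetilde{g_\rho}^{D^*})$ is K\"ahler--Einstein.
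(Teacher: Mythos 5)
Your proposal is correct, but it takes a genuinely different route from the paper's. The paper first reduces the linear injection $\rho$, by the affine coordinate changes of its Cases 1 and 2, to the normal form \eqref{rhoform}, and then verifies $\beta^*=3g_\rho$ by a direct computation of the second Koszul form in that normal form. You avoid the normal form entirely: after the same reduction to the Hesse--Einstein condition via \cite[Thm.~3.1.6]{Sh}, you use Legendre duality (the identity $\varphi^*=\tfrac12\log\det\rho(\xi)-\tfrac{n}{2}$, which is correct) together with the determinant identity $\det g_\rho=C\,(\det\rho(\xi))^{-3}$, proved by a Schur complement (the cancellation of the $\theta$-dependence there does check out) plus a Gram-determinant argument in the $3$-dimensional space $\mathcal{S}_2$. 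Your route is more invariant and arguably more illuminating: it needs no case analysis, and it isolates exactly where $2$-dimensionality enters, namely that $V=\mathrm{span}\{A_1,A_2\}$ has codimension one in $\mathcal{S}_2$, that the $2\times 2$ Cayley--Hamilton identity applies, and that $\rho(\xi)\in V$ forces the trace term $\tr(M\rho(\xi))$ to vanish; it also exhibits the Einstein constant as $n+1$. One caveat: your normalization constants are mutually inconsistent. If you identify $\mathcal{S}_2\cong\RN^3$ so that the cross-product normal $M$ satisfies $\tr(MA_\alpha)=0$ (i.e.\ via a basis orthonormal for the trace form), the full Gram determinant of $\langle\cdot,\cdot\rangle_P$ is $(\det P)^{-3}$, not $2(\det P)^{-3}$; the factor $2$ belongs to the naive coordinate identification, under which orthogonality of $M$ to $A_\alpha$ in $\RN^3$ corresponds to $\tr(NA_\alpha)=0$ for the matrix $N$ obtained from $M$ by halving the off-diagonal entries, not to $\tr(MA_\alpha)=0$. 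This slip only changes the positive constant $C$ (which your own sign argument, correctly, shows is positive since $V$ meets $\mathcal{S}_2^+$), and $C$ drops out upon taking $d\log$; so the conclusion $\beta^*=3g_\rho$, and hence the theorem, are unaffected.
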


\begin{proof}[Outline of proof]
In our situation, applying certain coordinate (affine) transformations, 
we can reduce the linear injection $\rho$ into the form
\begin{equation}
\rho(\xi)=\rho(\xi^1,\xi^2)=\left(\begin{array}{cc}
\xi^1&a\xi^1+b\xi^2\\a\xi^1+b\xi^2&\xi^2\end{array}\right)
{\ }{\ }{\ }(a,b\in\RN)\label{rhoform}
\end{equation}
as follows:

In general, a linear map $\rho : U (\subset\RN^2)\rightarrow \mathcal{S}_2$
is written by
\begin{equation*}
\rho(\xi_1,\xi_2)=\left(\begin{array}{cc}
\rho_{11}(\xi_1, \xi_2)&\rho_{12}(\xi_1, \xi_2)\\
\rho_{21}(\xi_1, \xi_2)&\rho_{22}(\xi_1, \xi_2)\end{array}\right)
\end{equation*}
where $\rho_{ij}(\xi_1, \xi_2)$ is a polynomial of degree $1$
$(1\le i, j\le 2)$.

Case 1:\ If $\rho_{11}\ne c \rho_{22}$ for any constant $c$,
change the coordinate
$(\xi_1, \xi_2; \theta)\mapsto
(\rho_{11}(\xi_1, \xi_2), \rho_{22}(\xi_1, \xi_2); \theta)$.

Case 2:\ If $\rho_{11}= c \rho_{22}$ for a constant $c$,
change the coordinate
$(\xi; \theta)\mapsto (\xi; A^{-1}\theta)$ for $A\in GL(2,\RN)$
such that $\rho^{\prime}(\xi):= A\rho(\xi)\trp{A}$ satisfies
the condition of the case 2 and we consider the statistical model induced
by $\rho^{\prime}$.
Thus, we can reduce this case into the case 1.

Computing the 2nd Koszul form $\beta^*$ of the Hessian structure
$(D^*, g_{\rho})$ where $\rho$ is given in the form \eqref{rhoform},
we get
\begin{equation*}
\beta^*=3g_{\rho}
\end{equation*}
from which we obtain Theorem \ref{Hmain}.
\end{proof}

\end{document}